\documentclass[12pt]{amsart}
\usepackage{amssymb, amsmath, amsmath}
\usepackage[backref]{hyperref}
\usepackage{mathrsfs}
\usepackage[alphabetic,backrefs,lite]{amsrefs}
\usepackage{amscd}   
\usepackage{fullpage}
\usepackage[all]{xy} 
\usepackage{setspace}

\DeclareFontEncoding{OT2}{}{} 

\usepackage[usenames,dvipsnames]{color}

\newtheorem{lemma}{Lemma}[section]
\newtheorem{theorem}[lemma]{Theorem}
\newtheorem{proposition}[lemma]{Proposition}

\newtheorem{claim*}{Claim}

\theoremstyle{definition}
\newtheorem{remark}[lemma]{Remark}

\newcommand{\PP}{{\mathbb P}}

\newcommand{\F}{{\mathbb F}}

\newcommand{\calS}{{\mathcal S}}

\def\PP{\mathbb{P}}

\author[Aubry]{Yves Aubry}
\address[Aubry]{Institut de Math\'ematiques de Toulon - IMATH, Universit\'e de Toulon, France}
\email{yves.aubry@univ-tln.fr}

\address[Aubry]{Institut de Math\'ematiques de Marseille - I2M, Aix Marseille Univ, UMR 7373 CNRS, France}
\email{yves.aubry@univ-amu.fr}

\author[Voloch]{Jos\'e Felipe Voloch}
\address[Voloch]{School of Mathematics and Statistics, University of Canterbury, Christchurch 8140, New Zealand}
\email{felipe.voloch@canterbury.ac.nz}
\urladdr{https://www.math.canterbury.ac.nz/~f.voloch/}

\title{Rational points on smooth surfaces in $\PP^3$ over finite fields}

\begin{document}

\begin{abstract}
We improve a bound due to the second author on number of rational points on smooth surfaces in $\PP^3$ over finite fields and look at families of surfaces that achieve or nearly achieve this bound, for which we compute their exact number of rational points. These computations may have independent interest.
\end{abstract}

\maketitle



\section{Introduction}

Let $S$ be  a smooth surface in $\PP^3$ of degree $d$ defined over the finite field ${\mathbb F}_p$ with $p$ prime.
We are interested in the number $\sharp S({\mathbb F}_p)$ of rational points of $S$ over ${\mathbb F}_p$.
Deligne has proved, as a consequence of his proof of the Weil conjectures in \cite{Deligne}, that
$$\sharp S({\mathbb F}_p) \leq p^2+1 +(d^3-4d^2+6d-2)p.$$

There are, in addition, several bounds that can be obtained by elementary methods. See \cites{Homma-Kim-1,Homma-Kim-2} and the references therein. An example of such bound was proved by the second author in Proposition 2 of \cite{Voloch}, namely
$$\sharp S({\mathbb F}_p) \leq (d-1)(p+1)^2+p+1$$
which is better than Deligne's bound if $d>\sqrt{p/3}$ approximately.
Moreover, if we suppose that $S$ does not contain a line defined over ${\mathbb F}_p$, then the above bound can be improved to
$$\sharp S({\mathbb F}_p) \leq (d-1)p^2+(d-2)(p+1)+1.$$

By a different method, inspired by \cite{SV}, the second author proved in Theorem 2 of \cite{Voloch} that if $2<d<p$ and if $m$ denotes the number of ${\mathbb F}_p$-lines contained in 
$S$, then we have
\begin{equation}\label{Felipe}
\sharp S({\mathbb F}_p) \leq \frac{d(d+p-1)(d+2p-2)}{6}+m(p+1)
\end{equation}
and so, in particular:
\begin{equation}\label{Felipe_plus}
\sharp S({\mathbb F}_p) \leq \frac{d(d+p-1)(d+2p-2)}{6}+d(11d-24)(p+1).
\end{equation}
The main purpose of this paper is to improve this last bound and to provide examples of surfaces for which we are able to determine its number of rational points together the number of ${\mathbb F}_p$-lines contained in it and that achieve or get close to this bound.

\subsection*{Acknowledgements}
The first author would like to thank the GAATI laboratory at the Universit\'e de Polyn\'esie Fran\c caise for his hospitality during the academic year 2025/2026 in a CNRS position and is grateful to the Canterbury University at Christchurch in New Zealand where this work was initiated.

The second author was supported by the Marsden Fund administered by the Royal
Society of New Zealand. He would like to thank H. Borges, E. Esteves and S. Kleiman for helpful conversations.


\section{The main result}

Let $S$ be a smooth surface in $\PP^3$ of degree $d$ defined over the finite field ${\mathbb F}_p$ with $p$ prime and with $2<d<p$
and let $L_1,\ldots, L_m$ be the ${\mathbb F}_p$-lines on $S$.

Suppose that the surface $S$ is given as the set of zeros in ${\mathbb P}^3$ of a homogeneous polynomial $f(x_0,x_1,x_2,x_3)$ with coefficients in ${\mathbb F}_p$. Then consider the surface $S_1$ given as the zero locus of the polynomial $\sum_{i=0}^3(\partial f/\partial x_i)x_i^p$ and the surface $S_2$ given by the polynomial
$\sum_{i=0}^3\sum_{j=0}^3(\partial^2 f/\partial x_ix_j)x_i^px_j^p$. 

We consider, as in \cite{Voloch}, the scheme $X:=S\cap S_1\cap S_2$.
The following lemma will allow us to estimate the zero-dimensional part of $X$.

\begin{lemma}\label{Fondamental_Lemma}
    Let $\calS$ be a smooth surface in $\PP^3$ of degree $d$ and $\calS_i$ surfaces in $\PP^3$ of degree $d_i, i=1,2$. Assume that $\calS,\calS_1,\calS_2$ intersect in a zero-dimensional scheme $\Gamma$ and lines $L_1,\ldots,L_m$ with multiplicity one. Then
    $$2m \le \deg \Gamma - (dd_1d_2 -m(d+d_1+d_2)) \le m(m+1).$$
\end{lemma}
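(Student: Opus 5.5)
The plan is to compute an intersection number on the smooth surface $\calS$ itself, using adjunction. Put $C_i:=\calS\cap\calS_i$ for $i=1,2$. These are effective Cartier divisors on $\calS$, and both are linearly equivalent to $d_iH$, where $H$ is the hyperplane class on $\calS$. By hypothesis the only one-dimensional components of $C_1\cap C_2$ are the lines $L_1,\ldots,L_m$, each with multiplicity one. So I would write
$$C_1 = L + D_1,\qquad C_2 = L + D_2,\qquad L:=L_1+\cdots+L_m,$$
where $D_1,D_2$ are effective divisors on $\calS$ with no common component. Then $D_1\cap D_2$ is a zero-dimensional scheme of length $D_1\cdot D_2$. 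This residual scheme is what I take $\Gamma$ to be in the statement, i.e.\ the zero-dimensional part of the intersection once the lines are removed, counted with multiplicities. Since the whole computation depends on this identification, I would state it explicitly at the start.

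Next I would compute $D_1\cdot D_2$ numerically:
$$D_1\cdot D_2=(d_1H-L)\cdot(d_2H-L)=d_1d_2H^2-(d_1+d_2)H\cdot L+L^2 .$$
Here $H^2=d$ and $H\cdot L=m$, because each $L_i$ is a line. For $L^2$, expand
$$L^2=\sum_i L_i^2+2\sum_{i<j}L_i\cdot L_j .$$
By adjunction $K_\calS=(d-4)H$, so for a smooth rational curve $L_i$ we get $L_i^2+K_\calS\cdot L_i=-2$. This gives $L_i^2=-2-(d-4)=2-d$. For $i\neq j$, two distinct lines in $\PP^3$ meet in at most one point. When they do meet on the smooth surface $\calS$, they have distinct tangent directions there and so meet transversally, hence $L_i\cdot L_j\in\{0,1\}$. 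Let $k$ be the number of intersecting pairs, so $0\le k\le m(m-1)/2$.

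Combining these,
$$\deg\Gamma=D_1\cdot D_2=dd_1d_2-(d_1+d_2)m+m(2-d)+2k ,$$
and therefore
$$\deg\Gamma-\bigl(dd_1d_2-m(d+d_1+d_2)\bigr)=2m+2k .$$
The bounds $0\le k\le m(m-1)/2$ then give exactly $2m\le 2m+2k\le m(m+1)$.

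I expect the main obstacle to be justifying the decomposition $C_i=L+D_i$ and the equality $\deg\Gamma=D_1\cdot D_2$. This means checking two things. First, the multiplicity-one hypothesis really says that each $L_i$ occurs with coefficient exactly one in both $C_1$ and $C_2$. Second, embedded or residual points lying on the lines are correctly accounted for by $D_1\cap D_2$. For the first, I would argue locally at a generic point of each $L_i$, where $\calS$ is smooth, so the local ring of $\calS$ along $L_i$ is a DVR and the multiplicity statement becomes a statement about valuations. After that, everything else is the routine intersection-theoretic computation above.
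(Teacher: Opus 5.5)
Your proposal is correct and is essentially the paper's proof. Both work on $\calS$ with the residual classes $d_iH-L$ and use $H^2=d$, $H\cdot L_i=1$, and $L_i^2=2-d$ (from adjunction with $K_\calS=(d-4)H$), together with $L_i\cdot L_j\in\{0,1\}$; this yields $2m+2k$ with $0\le k\le m(m-1)/2$. One small correction to the obstacle you flag: multiplicity one of $L_i$ in the intersection gives $\min\bigl(v_{L_i}(C_1),v_{L_i}(C_2)\bigr)=1$, not necessarily coefficient one in both $C_1$ and $C_2$, but that is exactly what makes $D_1=C_1-L$ and $D_2=C_2-L$ effective with no common component, which is all your computation uses.
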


\begin{proof}
    We work with intersection products on $\calS$. The surfaces $\calS_i$ cut $\calS$ in divisors $C_i +L_1+\cdots+L_m$ and we are interested in $\Gamma = C_1\cdot C_2$. So, letting $H$ denotes a hyperplane section of $\calS$, we have
    $$\deg \Gamma = (d_1H - (L_1+\cdots+L_m))\cdot (d_2H - (L_1+\cdots+L_m)).$$

    By classical intersection theory on surfaces, one have the following: 
    $$H^2 = d,\ \  L_i.H = 1, \ \ {\rm and}\ \   L_i^2 = 2-d.$$
    The first follows since $\calS$ has degree $d$.
    For the second, a line and a plane meet at one point but if the line is contained in $\calS$, the intersection point is also in $\calS$. The third follows the previous one and the adjunction formula. 
 Indeed, if $L$ is a nonsingular curve of genus $g$ on a surface $S$, and $K$ is the canonical divisor on $S$, then the adjunction formula gives:
 $$2g-2=L.(L+K)=L^2+L.K.$$
 If $L=L_i$ is a line, its genus is zero and we get
 $$-2=L_i^2+L_i.K.$$
 Moreover, the canonical divisor $K$ on a smooth surface in $\PP^3$ of degree $d$ verifies:
 $$K\sim (d-4)H$$
 where $H$ is a hyperplane section. So we obtain
 $-2=L_i^2+(d-4)L_i.H$ and thus $L_i^2=2-d$ since $L_i.H=1$.
 
    Hence, the right hand side of the last equation expands to 

    $$d_1d_2H^2 - (d_1+d_2)(L_1+\cdots+L_m)\cdot H + (L_1+\cdots+L_m)^2 = dd_1d_2 -m(d_1+d_2) + (L_1+\cdots+L_m)^2.$$

    Finally, we have that $L_i\cdot L_j = 0$ or $1$ if $i\ne j$, depending on whether the lines intersect or not, so

    $$(L_1+\cdots+L_m)^2 = \sum_{i=1}^m L_i^2 + \sum_{i \ne j} L_i\cdot L_j \le m(2-d)+m(m-1)=m(m+1-d)$$
and
    $$(L_1+\cdots+L_m)^2 = \sum_{i=1}^m L_i^2 + \sum_{i \ne j} L_i\cdot L_j \ge m(2-d)$$
    
    which combine with the above to prove the lemma.
\end{proof}

The next theorem improves Bound (\ref{Felipe}).

\begin{theorem}\label{Main_Theorem}
Let $S$ be a smooth surface in $\PP^3$ of degree $d$ defined over the finite field ${\mathbb F}_p$ with $p$ prime and $2<d<p$. 
Let  $m$ be the number of lines defined over ${\mathbb F}_p$ on $S$ and suppose that they have multiplicity one in $X:=S\cap S_1\cap S_2$,
where, if $S$ is given by the zero locus of the polynomial
$f(x_0,x_1,x_2,x_3)$ then
 $S_1$ and $S_2$ are the surfaces given respectively by the zero locus of 
the polynomial $\sum_{i=0}^3(\partial f/\partial x_i)x_i^p$ and
the polynomial
$\sum_{i=0}^3\sum_{j=0}^3(\partial^2 f/\partial x_ix_j)x_i^px_j^p$.

Then the number of rational points on $S$ is upper bounded by:
\begin{equation}\label{Improved_bound}
\sharp S({\mathbb F}_p) \leq \frac{d(d+p-1)(d+2p-2)}{6}
-\frac{3m(d+p-1)-m(m+1)}{6}
+m(p+1).
\end{equation}

In particular we have:
\begin{equation}\label{Improved_without_m}
\sharp S({\mathbb F}_p) \leq \frac{d(d+p-1)(d+2p-2)}{6}
+\frac{(11d^2-30d+18)(11d^2-33d +28 +3p)}{6}.
\end{equation}

\end{theorem}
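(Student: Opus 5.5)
The plan is to redo the counting from the proof of Theorem 2 of \cite{Voloch}, replacing the crude estimate for the zero-dimensional part of $X$ by Lemma \ref{Fondamental_Lemma}. Since $f$ has degree $d$, the surface $S_1$ has degree $d_1=d+p-1$ and $S_2$ has degree $d_2=d+2p-2$. Note that
$$d+d_1+d_2=3(d+p-1).$$
By hypothesis the lines $L_1,\dots,L_m$ occur in $X$ with multiplicity one. I would first check that, apart from these lines, $X$ is zero-dimensional. This is where the hypothesis $d<p$ and the smoothness of $S$ enter, as in \cite{Voloch}: an ${\mathbb F}_p$-irreducible curve in $X$ other than an ${\mathbb F}_p$-line is excluded there. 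Granting this, write $\Gamma$ for the zero-dimensional part. Lemma \ref{Fondamental_Lemma}, applied with $\calS=S$ and $\calS_i=S_i$, then gives
$$\deg\Gamma\le d(d+p-1)(d+2p-2)-3m(d+p-1)+m(m+1).$$

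Next I would split $S({\mathbb F}_p)$ into the rational points lying on some $L_i$ and the rest. The first set has at most $m(p+1)$ elements. Every rational point $P$ of $S$ lies on $S_1$ and $S_2$:
\begin{itemize}
\item for $S_1$, since $x_i^p=x_i$ at $P$ and Euler's relation gives $\sum x_i\,\partial f/\partial x_i=df=0$;
\item for $S_2$, similarly, since the second-order Euler identity gives $d(d-1)f=0$.
\end{itemize}
The key local input, taken from \cite{Voloch} in the spirit of \cite{SV}, is that each rational point $P$ not on an ${\mathbb F}_p$-line of $S$ is an isolated point of $X$ whose local intersection multiplicity in $\Gamma$ is at least $6$. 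The idea is to expand in local coordinates at $P$: the Frobenius twist makes $S_1$ osculate $S$ to order two at $P$, and $S_2$ adds another order of tangency along the relevant directions. The condition $d<p$ prevents the Taylor coefficients from vanishing. I expect this multiplicity statement to be the main obstacle. Since it is already established in the proof of Theorem 2 of \cite{Voloch}, I would cite it rather than redo it, and only check that the contributions of these points to $\deg\Gamma$ are disjoint from the lines. Combining, we get
$$6\big(\sharp S({\mathbb F}_p)-m(p+1)\big)\le\deg\Gamma,$$
and dividing by $6$ yields (\ref{Improved_bound}).

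For (\ref{Improved_without_m}), write the $m$-dependent part of (\ref{Improved_bound}) as
$$g(m)=\frac{m\,(m+1+6(p+1)-3(d+p-1))}{6}=\frac{m\,(m+3p-3d+10)}{6}.$$
Since $p>d$, the function $g$ is increasing in $m\ge0$. So it suffices to bound $m$ by the known bound $m\le 11d^2-30d+18$ on the number of lines on a smooth surface of degree $d\ge3$ in characteristic $p>d$ (Segre/Rams--Sch\"utt type bound). Substituting $m=11d^2-30d+18$ gives
$$m+3p-3d+10=11d^2-33d+28+3p,$$
which is exactly the stated expression.
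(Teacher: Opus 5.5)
Your proposal is correct and follows the paper's proof step for step. The one-dimensional part of $X$ consists of the $\F_p$-lines (from \cite{Voloch}); Lemma \ref{Fondamental_Lemma} with $d_1=d+p-1$, $d_2=d+2p-2$ bounds $\deg\Gamma$; each rational point off those lines contributes at least $6$ (again from \cite{Voloch}); the lines carry at most $m(p+1)$ points; and $m\le 11d^2-30d+18$ then gives (\ref{Improved_without_m}). Your remark that $m\mapsto m(m+3p-3d+10)/6$ is increasing for $m\ge 0$ (since $p>d$) also makes explicit the substitution step that the paper leaves implicit.
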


\begin{proof}
In \cite{Voloch} it is proved that the one dimensional components of $X$ are lines. Hence, the situation here is as in Lemma \ref{Fondamental_Lemma}, with the surface $S_1$ of degree $d_1 = d + p -1$, and the surface $S_2$ of degree $d_2 =d+2p -2$.
So, since we have supposed that the ${\mathbb F}_p$-lines $L_1,\ldots, L_m$ on $S$ have multiplicity one in $X$, we have by Lemma \ref{Fondamental_Lemma} that the zero-dimensional subscheme $\Gamma$ of $X$ given by its isolated rational points verifies:

$$\deg \Gamma \le d(d + p -1)(d+2p -2) -m(3d +3p-3) + m(m+1).$$

Moreover, is has been proved in the proof of Theorem 2 of  \cite{Voloch} that, outside of the lines contained  in $S$,  
the rational points of $S$ are isolated points of $S\cap S_1\cap S_2$ with multiplicity at least $6$.
So, the rational points of $S$ not on lines have multiplicity at least $6$ in $\Gamma$ and thus
 we get the following bound:

$$\# S(\F_p) \le \frac{d(d + p -1)(d+2p -2) -m(3d +3p-3) + m(m+1)}{6} + m(p+1)$$

which can be rewritten as

$$\# S(\F_p) \le \frac{d(d + p -1)(d+2p -2) -m(3d-7-m)}{6} + \frac{m(p+1)}{2}.$$

Furthermore,  
Bauer and Rams have proved in Theorem 1 in \cite{Bauer-Rams}
 that a smooth surface in ${\mathbb P}^3$  of degree $d>2$ over a field of characteristic 0 or of characteristic $p>d$  contains at most
 $11d^2-30d+18$  lines. So the last bound of the theorem follows.
\end{proof}

\begin{remark}
In order to see  when Theorem \ref{Main_Theorem}  improves Bound (\ref{Felipe}), 
we set 
$$G:=\frac{3m(d+p-1)-m(m+1)}{6}.$$
It is easy to see that $G\geq 0$ if and only if $m\leq 3d+3p-4$.
But we have already seen that, as a smooth surface in ${\mathbb P}^3$ of degree $2<d<p$ defined over ${\mathbb F}_p$, $S$ contains at most $11d^2-30d+18$ lines. So, as soon as $11d^2-30d+18\leq 3d+3p-4$, we will have $G\geq 0$. Thus, if $p> \frac{11d^2-33d+22}{3}$ then $G> 0$ and Theorem \ref{Main_Theorem}  improves Bound (\ref{Felipe}).
\end{remark}

\begin{remark}
We have tried but not succeeded in proving that, under the other hypothesis of Theorem \ref{Main_Theorem}, that the lines of $X:=S\cap S_1\cap S_2$ always occur with multiplicity one. We will prove that this is the case for Fermat surfaces in Proposition \ref{Number_Lines}. We have also verified by computer that this is so on a number of examples. We expect that this is always the case.

\end{remark}

\begin{remark}
Let $S$ be a smooth surface in $\PP^3$ of degree $d$ defined over a finite field ${\mathbb F}_q$ of characteristic $p$.

If $d=3$ then $S$ is a nonsingular cubic surface and it is well known that $S$ contains at most 27 lines. But one knows very precise results on its number of rational points. Indeed one has
$$\sharp S({\mathbb F}_q) = q^2+nq+1$$
where $-2\leq n\leq 7$ with $n\not=6$  (see for example \cite{Manin}, Table 1).
 Moreover, Swinnerton-Dyer proved in \cite{SW} that 
for $q=2,3$ or $5$ then we have $n\leq 5$. Following the proof of Theorem \ref{Main_Theorem}, all points in $X:=S\cap S_1\cap S_2$ that are not on a line are rational. Indeed, an asymptotic line at a point of $S$ that is not contained in $S$, does not meet $S$ again, since $S$ has degree $3$. These points occur with multiplicity $6$ on $X$ if, furthermore, they are not contained in an (irrational) line of $S$. We hoped to recover the known results on the number of rational points of cubic surfaces from these facts but have not succeeded.

If $d=4$ and $p\not=2,3$ then Rams and Sch\"utt in \cite{R-S} proved that a smooth quartic surface $S$ contains at most 64 lines and this bound is sharp.
So for $p\geq 5$
Bound(\ref{Felipe}) gives
$$\sharp S({\mathbb F}_p) \leq \frac{4(p+3)(2p+2)}{6}
+64p+64,$$
which is improved by Bound (\ref{Improved_bound}) of Theorem \ref{Main_Theorem}
as soon as $p\geq 19$ in the following way:
$$\sharp S({\mathbb F}_p) \leq \frac{4(p+3)(2p+2)}{6}
+32p+661.$$

\end{remark}



\section{Surfaces without isolated rational points}

Consider Fermat type surfaces $S$ in ${\mathbb P}^3$ defined over ${\mathbb F}_p$ with $p\equiv 1\pmod d$ given by the equation:
$$x^d + y^d -z^d -w^d=0.$$

\begin{proposition}\label{Number_Lines}
If $-1$ is a $d$-th power modulo $p$ then the  surface $S$ contains $3d^2$ lines defined over ${\mathbb F}_p$ and
all the lines in the intersection $X:=S\cap S_1 \cap S_2$ have multiplicity one.
\end{proposition}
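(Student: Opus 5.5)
We plan to split the statement into two parts: first count the ${\mathbb F}_p$-lines on $S$, then verify multiplicity one by a local computation along each line.

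\emph{Counting the lines.} Since $p\equiv 1 \pmod d$, ${\mathbb F}_p$ contains the group $\mu_d$ of $d$-th roots of unity. Since $-1$ is a $d$-th power, we may fix $\varepsilon\in{\mathbb F}_p$ with $\varepsilon^d=-1$. The classical description of lines on the Fermat surface $x^d+y^d=z^d+w^d$ (valid for $d\ge 3$ and $p>d$) gives exactly $3d^2$ lines, obtained by pairing the four coordinates into two pairs in the three possible ways:
\begin{itemize}
\item $x=\zeta z,\ y=\eta w$;
\item $x=\zeta w,\ y=\eta z$;
\item $x=\varepsilon\zeta y,\ z=\varepsilon\eta w$;
\end{itemize}
with $\zeta,\eta\in\mu_d$. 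Each family contributes $d^2$ lines. Under our hypotheses all coefficients lie in ${\mathbb F}_p$, so all $3d^2$ lines are defined over ${\mathbb F}_p$. We will quote the standard fact that these are all the lines on a smooth Fermat surface in characteristic $p>d$ (proved by intersecting with planes, or via Segre's argument), so none is missing.

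\emph{Multiplicity one.} The symmetry group of $S$ (permutations of coordinates compatible with the signs, scalings by $\mu_d$, and scalings by $\varepsilon$) is defined over ${\mathbb F}_p$. It preserves $S_1$ and $S_2$ up to scalars, since for a diagonal substitution with ${\mathbb F}_p$-coefficients $\lambda$ we have $\lambda^p=\lambda$. This group acts transitively on the lines, so it suffices to treat $L:\ x=z,\ y=w$.

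Here $f_x=dx^{d-1}$, $f_y=dy^{d-1}$, $f_z=-dz^{d-1}$, $f_w=-dw^{d-1}$, so the equations become
$$F_1=d\,(x^{d-1}x^p+y^{d-1}y^p-z^{d-1}z^p-w^{d-1}w^p),$$
$$F_2=d(d-1)\,(x^{d-2}x^{2p}+y^{d-2}y^{2p}-z^{d-2}z^{2p}-w^{d-2}w^{2p}).$$
We pass to local coordinates $u=x-z$, $v=y-w$, with $z,w$ parametrizing $L$. On $S$ near a general point of $L$, the equation $f=0$ gives $u\,dz^{d-1}+v\,dw^{d-1}+O(2)=0$. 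Hence $S$ is locally a smooth surface with one transverse coordinate, say $t=u$, with $v=-(z/w)^{d-1}u+\cdots$.

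Multiplicity one of $L$ in $X=S\cap S_1\cap S_2$ means that at a general point of $L$ the ideal $(F_1,F_2)$ restricted to $S$ contains the local equation $t$ to first order. Equivalently, the restriction of $F_1$ or of $F_2$ vanishes to order exactly one along $L$. So we compute the linear term in $u,v$ of
$$F_1=d\big((z+u)^{d-1+p}-z^{d-1+p}+(w+v)^{d-1+p}-w^{d-1+p}\big),$$
which is
$$d(d-1+p)\big(z^{d-2+p}u+w^{d-2+p}v\big).$$
Since $d-1+p\equiv d-1\not\equiv 0 \pmod p$, this coefficient is nonzero. Substituting $v=-(z/w)^{d-1}u$ gives the coefficient
$$d(d-1)\,z^{d-1}\big(z^{p-1}-w^{p-1}\big)\,u.$$

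This vanishes at rational points of $L$, but it is nonzero as a polynomial on $L$. Hence at a general (non-rational) point of $L$, $F_1|_S$ vanishes to order one and $S\cap S_1$ already contains $L$ with multiplicity one. Since $X\subseteq S\cap S_1$ and $L\subseteq X$, the multiplicity of $L$ in $X$ is also one.

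\emph{Main obstacle.} The delicate step is making ``multiplicity one'' precise: the lines should occur with multiplicity one in the one-dimensional part of $X$, i.e.\ the local ring of $X$ at the generic point of $L$ should have length one. The argument above handles this by working at the generic point, where $F_1|_S$ is a uniformizer times a unit. If one needs the stronger statement that the residual curves $C_i$ in Lemma~\ref{Fondamental_Lemma} do not contain $L$, note that $S_1\cdot S = L_1+\cdots+L_m+C_1$ with $C_1\not\supseteq L$. This follows from the same order-one computation on $S$ at a general point of $L$. The analogous check for $F_2$, whose linear term has coefficient proportional to $z^{2p-2}-w^{2p-2}$, gives the same conclusion for $C_2$.
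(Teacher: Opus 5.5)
Your proof is correct and counts the lines exactly as the paper does, by quoting the classification of lines on the Fermat surface. Your proof of multiplicity one, however, takes a different route.

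\textbf{The paper's route.} It uses neither the explicit lines nor any symmetry. In the chart $w=1$ it shows that the gradients $(dx^{d-1},dy^{d-1},-dz^{d-1})$ of $S$ and $(d-1)(x^{d+p-2},y^{d+p-2},-z^{d+p-2})$ of $S_1$ are proportional at only finitely many points of $S\cap S_1$. So $S\cap S_1$ is reduced away from a finite set, every one-dimensional component (in particular every line) has multiplicity one there, and hence also in $X\subseteq S\cap S_1$.

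\textbf{Your route.} You use the $\mathbb{F}_p$-rational monomial automorphisms of $S$ to reduce to the single line $x=z,\ y=w$. These preserve $S_1$ and $S_2$ because $f\mapsto F_1,F_2$ is equivariant under $\mathrm{GL}_4(\mathbb{F}_p)$. You then expand $F_1|_S$ to first order in a transverse coordinate.

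\textbf{Comparison.} The underlying criterion is the same: your coefficient $z^{d-1}(z^{p-1}-w^{p-1})$ vanishes exactly where the two gradients are proportional at the point $(z:w:z:w)$.
\begin{itemize}
\item Your version locates the non-transversal points on $L$ precisely: they are its $\mathbb{F}_p$-points, where $S_1$ is always tangent to $S$.
\item Your parallel check for $F_2$, with coefficient $d(d-1)(d-2)z^{d-1}(z^{2p-2}-w^{2p-2})\neq 0$, shows each line has multiplicity one in the divisor $S\cdot S_2$ as well. This is worth keeping. The proof of Lemma~\ref{Fondamental_Lemma} writes $S\cdot S_i=C_i+L_1+\cdots+L_m$ for both $i=1,2$, but the paper's argument treats only $S\cap S_1$. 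Length one of $X$ at the generic point of $L$ does not by itself exclude $L\subseteq C_2$.
\item The cost of your route is the transitivity claim, which you state loosely. Reaching the third family needs composite maps such as $(x,y,z,w)\mapsto(x,\varepsilon z,\varepsilon y,w)$, a transposition combined with scaling by $\varepsilon$; sign-compatible permutations and scalings taken separately do not suffice. These maps are defined over $\mathbb{F}_p$ and preserve $f$, so the reduction is valid.
\item The paper's global argument, by contrast, needs no knowledge of the lines at all.
\end{itemize}
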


\begin{proof}
Following \cite{ARM} (which ostensibly is over the complex numbers field $\mathbb C$, but one can see that the proofs work over finite fields) one can prove that  $S$ has $3d^2$ lines defined over ${\mathbb F}_p$, namely the lines with equations
\begin{equation*}
\left\{
\begin{matrix}
w = \eta^i x   \\
y = \eta^kz \cr
\end{matrix}
\right.
\qquad
\left\{
\begin{matrix}
x = \eta^{k+i} z   \\
w = \eta^iy\cr
\end{matrix}
\right. 
\qquad
\left\{
\begin{matrix}
x = v\eta^i y   \\
w = \eta^{k+i}z \cr
\end{matrix}
\right. 
\end{equation*}
where $\eta$ is a primitive $d$-th root of unity in ${\mathbb F}_p$ and $v\in {\mathbb F}_p$ such that $v^d=-1$.

Moreover, one can prove that already for $S\cap S_1$ (and a posteriori for $X$) the multiplicity of the lines is one. We show this by verifying that $S$ and $S_1$ intersect transversely outside of a finite set. We work in the affine patch $w=1$ (and the other patches are similar). Then $S$ has equation $x^d +y^d - z^d =1$ with gradient $(dx^{d-1}, dy^{d-1},-dz^{d-1})$ and 
$S_1$ has equation $x^{d+q-1} +y^{d+q-1} - z^{d+q-1} =1$ with gradient $((d-1)x^{d+q-2}, (d-1)y^{d+q-2},-(d-1)z^{d+q-2})$.

The intersection $S\cap S_1$ is not transversal at points where the gradients are proportional. For this to happen, there is a constant $\lambda$ for which $dx^{d-1} = \lambda (d-1)x^{d+q-2}$ and similarly for $y,z$. So, $x^{q-1} = d/(\lambda (d-1))$ and similarly for $y,z$. So, $x= \lambda^{-1/(q-1)}c_x, y= \lambda^{-1/(q-1)}c_y, z= \lambda^{-1/(q-1)}c_z$ where $c_x,c_y,c_z$ come from the same finite set $c^{q-1} = d/(d-1)$. But, using that $(x,y,z) \in S$,  we get 
$\lambda^{d/(q-1)} = c_x^d+c_y^d+c_z^d$, so $\lambda$ is also on a finite set, proving that $S$ and $S_1$ intersect transversely outside of a finite set.
\end{proof}



Consider now the surface $S$ in ${\mathbb P}^3$ defined over ${\mathbb F}_p$ given by the equation:
$$x^{(p-1)/2} + y^{(p-1)/2} -z^{(p-1)/2} -w^{(p-1)/2}=0.  $$

If we denote by $f$ the polynomial $x^{(p-1)/2} + y^{(p-1)/2} -z^{(p-1)/2} -w^{(p-1)/2}$ then
the two surfaces $S_1$ and $S_2$ given respectively by the polynomials
 $\sum_{i=0}^3(\partial f/\partial x_i)x_i^p$ and 
$\sum_{i=0}^3\sum_{j=0}^3(\partial^2 f/\partial x_ix_j)x_i^px_j^p$ have equation respectively:

$$x^{3(p-1)/2} + y^{3(p-1)/2} -z^{3(p-1)/2} -w^{3(p-1)/2}=0$$
and
$$x^{5(p-1)/2} + y^{5(p-1)/2} -z^{5(p-1)/2} -w^{5(p-1)/2}=0.$$

We give below the number of rational points on $S$ and we show that the scheme $X:=S\cap S_1\cap S_2$ does not contain isolated rational points.
\begin{proposition}
The number of rational points of the surface $S$ in ${\mathbb P}^3$ defined over ${\mathbb F}_p$  by 
$x^{(p-1)/2} + y^{(p-1)/2} -z^{(p-1)/2} -w^{(p-1)/2}=0$ is given by:

$$\sharp S({\mathbb F}_p)=\frac{3}{8}(p^3-3p^2+11p-9).$$
This is an example of a surface such that $S\cap S_1\cap S_2$ does not contain isolated rational points.
\end{proposition}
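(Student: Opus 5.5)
The plan is to reduce both assertions to the values of the quadratic character. Put $e=(p-1)/2$ and $h=e$. For $a\in\F_p$ we have $a^e=\chi(a)$, where $\chi$ is the Legendre symbol extended by $\chi(0)=0$. The value $0$ is taken once, and each of $\pm1$ is taken $h$ times.

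\emph{The count.} An affine point $(x,y,z,w)\in\F_p^4$ lies on the cone over $S$ exactly when $\chi(x)+\chi(y)=\chi(z)+\chi(w)$.
\begin{enumerate}
\item The number of pairs $(x,y)$ with a given value $s=\chi(x)+\chi(y)$ is the coefficient of $t^s$ in $(1+ht+ht^{-1})^2$. It equals $1+2h^2$ for $s=0$, $2h$ for $s=\pm1$, and $h^2$ for $s=\pm2$.
\item Summing the squares of these coefficients gives the number of affine solutions,
$$N=(1+2h^2)^2+2(2h)^2+2h^4=6h^4+12h^2+1.$$
\item Removing the origin and dividing by $p-1=2h$ gives
$$\sharp S(\F_p)=3h^3+6h=\tfrac38\bigl((p-1)^3+8(p-1)\bigr)=\tfrac38(p^3-3p^2+11p-9),$$
which is the stated formula. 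This part is a routine check.
\end{enumerate}

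\emph{No isolated rational points.} For $a\in\F_p$ we have $a^{3e}=a^{5e}=a^e$, so every rational point of $S$ lies on $S_1$ and $S_2$. It therefore suffices to show that every rational point $P=(x_0,y_0,z_0,w_0)$ of $S$ lies on an $\F_p$-line contained in all three surfaces. Such a line is then a one-dimensional component of $X$, or lies in one, so $P$ is not an isolated point of $X$.

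The key combinatorial observation is that the sums $s$ range over $\{-2,\dots,2\}$. Hence $\chi(x_0)+\chi(y_0)=\chi(z_0)+\chi(w_0)$ forces one of two cases:
\begin{enumerate}
\item The multisets $\{\chi(x_0),\chi(y_0)\}$ and $\{\chi(z_0),\chi(w_0)\}$ coincide. Then, after reordering, $x_0$ is paired with $z_0$ (or $w_0$) of the same character, and likewise for $y_0$.
\item Both sums vanish. Then $x_0$ is paired with $y_0$ and $z_0$ with $w_0$.
\end{enumerate}

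For each pair of coordinates $(u,v)$ I impose one linear equation through $P$ that makes $u^{ke}\mp v^{ke}$ vanish identically on the line for $k=1,3,5$:
\begin{itemize}
\item if $\chi(u_0)=\chi(v_0)\neq 0$, take $u=\alpha v$ with $\alpha=u_0/v_0$, so $\alpha^e=1$;
\item if $\chi(u_0)=-\chi(v_0)\neq0$, take $u=\gamma v$ with $\gamma^e=-1$ (here $\gamma^{3e}=\gamma^{5e}=-1$ as well);
\item if $u_0=v_0=0$, take $u=\lambda v$ with any $\lambda$ satisfying $\lambda^e=1$ (for example $\lambda=1$).
\end{itemize}
The two equations involve disjoint pairs of variables, so they are independent and cut out an $\F_p$-line through $P$. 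On this line each of $f$, $f_{S_1}$, $f_{S_2}$ splits as a sum of two pair-expressions that vanish identically, so the line lies in $X$.

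The main obstacle is bookkeeping rather than ideas. One must check that every sign pattern, including those with zero coordinates, falls into one of the pairings above. One must also check that the chosen pairing is compatible with the signs in $x^e+y^e-z^e-w^e$: in the equal-multiset case the pairs are across the minus sign, and in the zero-sum case they are within each side. A short case table over the possible values of $(\chi(x_0),\chi(y_0),\chi(z_0),\chi(w_0))$ settles this.
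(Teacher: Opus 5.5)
Your point count is correct and is the paper's computation written as a generating function: both give $6h^4+12h^2+1$ affine solutions. For the claim about isolated points, your strategy is sound and differs from the paper's. You put an $\F_p$-line lying in $X$ through every rational point of $S$. However, the recipe you wrote down fails in one sub-case, and it sits exactly in the bookkeeping you left for the case table.

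\textbf{The failing case.} Your third bullet ($u_0=v_0=0$, take $u=\lambda v$ with $\lambda^e=1$) only works for a pair straddling the minus sign, where you need $u^{ke}-v^{ke}\equiv 0$. For a pair on the same side of the minus sign you need $u^{ke}+v^{ke}\equiv 0$, and $\lambda^e=1$ gives $2v^{ke}$ instead.

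This case does occur. The points $(0:0:z_0:w_0)$ with $z_0/w_0$ a non-residue lie on $S$, and symmetrically so do the points $(x_0:y_0:0:0)$ with $x_0/y_0$ a non-residue. Their character multisets are $\{0,0\}$ and $\{1,-1\}$, so only your case (2) applies. Your recipe then returns the line $x=y$, $z=(z_0/w_0)w$, on which $f$ restricts to $2y^e\neq 0$.

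The repair is immediate. For a same-side pair with $u_0=v_0=0$, take $u=\nu v$ with $\nu$ a quadratic non-residue, so that $\nu^{e}=\nu^{3e}=\nu^{5e}=-1$. With that change every sign pattern is covered. The lines you produce then all belong to the paper's three families: $x=ay,\ z=bw$ with $a,b$ non-residues, and $x=a'z,\ y=b'w$ or $x=a'w,\ y=b'z$ with $a',b'$ residues.

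\textbf{Comparison with the paper.} The paper substitutes $d=h$, $d_1=3h$, $d_2=5h$, $m=3h^2$ and $\sum_{i\ne j}L_i\cdot L_j=6h^2(p-2)$ into the equality form of Lemma~\ref{Fondamental_Lemma}, and obtains $\deg\Gamma=0$. That proves more: $X$ has no isolated points at all, rational or not. But it rests on three inputs:
\begin{enumerate}
\item Proposition~\ref{Number_Lines}, giving multiplicity one of the lines in $X$;
\item the fact that these $3h^2$ lines are all the one-dimensional components of $X$;
\item the pairwise intersection count among the lines.
\end{enumerate}
Once repaired, your argument needs none of this. It uses only that $c^{ke}=c^e$ for $c\in\F_p$ and $k$ odd, and it yields exactly the stated claim about rational points.
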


\begin{proof}
Let us remark that if $a\in{\mathbb F}_p$ then, $a^{(p-1)/2}=0$ if and only if $a=0$ and  it is equal to 1 if $a$ is a quadratic residue modulo $p$ and is equal to $-1$ otherwise. And it is well known that the number of nonzero quadratic residues, as the number of  quadratic nonresidues modulo $p$ is equal to $(p-1)/2$. We begin by counting the number of affine rational points on $S$. When none of the coordinates are zero, we have 6 choices for the positions of the $+1$, so $6(\frac{p-1}{2})^4$ solutions. If two coordinates are zero, then we have also 6 choices for the zero coordinates and then two possibilities for the choices of $+1$ or $-1$, thus $6\times 2 \times (\frac{p-1}{2})^2$ solutions. Adding the point with all coordinate zero, we obtain the number $N_{\rm aff}$ of affine rational points on $S$ which is equal to 
$6(\frac{p-1}{2})^4+12 (\frac{p-1}{2})^2+1$ and then $\sharp S({\mathbb F}_p) =\frac{N_{\rm aff}-1}{p-1}$ which gives the result.

Moreover, let us calculate the degree of $\Gamma$, the zero-dimensional scheme appearing in the intersection 
$X:=S\cap S_1\cap S_2$.
By Proposition \ref{Number_Lines} we have that the lines  
in $X$ have multiplicity one, and so we can apply
Lemma \ref{Fondamental_Lemma} and we get
$\deg\Gamma=dd_1d_2 - m(d+d_1+d_2)+2m+ \sum_{i \ne j} L_i\cdot L_j$
with, here, $d=(p-1)/2, d_1=3(p-1)/2$ and $d_2=5(p-1)/2$. Let us scrutinize now the intersections of the lines $L_i$.
Since $-1$ is clearly a $(p-1)/2$-th power modulo $p$, one can use
 Proposition \ref{Number_Lines} to asserts that the lines $L_i$ contained in  $X$
 are of three types:
\begin{equation*}
\left\{
\begin{matrix}
x = ay   \\
z = bw \cr
\end{matrix}
\right.
\qquad
\left\{
\begin{matrix}
x = a'z   \\
y = b'w\cr
\end{matrix}
\right. 
\qquad
\left\{
\begin{matrix}
x = a' w   \\
y = b'z \cr
\end{matrix}
\right. 
\end{equation*}
where $a,b$ are quadratic non-residues modulo $p$ and $a',b'$ are (nonzero) quadratic residues modulo $p$.
Thus we obtain, according to Proposition \ref{Number_Lines}, that  the number of lines is $m=3(\frac{p-1}{2})^2$.

If $L_1$ and $L_2$ are two lines of same type given, for example, by equations
\begin{equation*}
\left\{
\begin{matrix}
x = a_1y   \\
z = b_1w \cr
\end{matrix}
\right.
\qquad
\left\{
\begin{matrix}
x = a_2y   \\
z = b_2w\cr
\end{matrix}
\right. 
\end{equation*}
then $L_1.L_2=1$ if and only if $a_1=a_2$ or $b_1=b_2$.
Hence the contribution will be in the case of lines $L_i$ and $L_j$ of same type:
$$\sum_{i\not=j, {\rm same\ type}}L_i.L_j=3\left(\frac{p-1}{2}\right)^2\left((\frac{p-1}{2}-1)\times 2\right)=3\left(\frac{p-1}{2}\right)^2(p-3).$$

Moreover, if $L_1$ and $L_2$ are two lines of different type given, for example, by equations
\begin{equation*}
\left\{
\begin{matrix}
x = ay   \\
z = bw \cr
\end{matrix}
\right.
\qquad
\left\{
\begin{matrix}
x = a'z   \\
y = b'w\cr
\end{matrix}
\right. 
\end{equation*}
then $L_1.L_2=1$ if and only if $ab'=a'b$.
Hence the contribution will be in the case of lines $L_i$ and $L_j$ of different type:
$$\sum_{i\not=j, {\rm different\  type}}L_i.L_j=6\left(\frac{p-1}{2}\right)^3.$$
Finally, we obtain:
$$\sum_{i\not=j}L_i.L_j=6\left(\frac{p-1}{2}\right)^2(p-2).$$

One can compute now 
$\deg\Gamma= dd_1d_2 - m(d+d_1+d_2)+2m+ \sum_{i \ne j} L_i\cdot L_j$ which gives
$$\deg\Gamma= 15\left(\frac{p-1}{2}\right)^3-3\left(\frac{p-1}{2}\right)^2\left(9\left(\frac{p-1}{2}\right)\right)+6\left(\frac{p-1}{2}\right)^2+6\left(\frac{p-1}{2}\right)^2(p-2)=0.$$
Thus $S\cap S_1\cap S_2$ does not contain isolated rational points.
\end{proof}



\section{Surfaces without rational lines}
\label{nolines}

We are interested in the surface
 $S$ in ${\mathbb P}^3$ defined over ${\mathbb F}_p$  by the equation:
$$x^{2(p-1)/5} + y^{2(p-1)/5} + x^{(p-1)/5} y^{(p-1)/5}+z^{2(p-1)/5}  + w^{2(p-1)/5}=0$$
with $p\equiv 1\pmod 5$.

Let us remark that if $t\in{\mathbb F}_p^{\ast}$, then $(t^{2(p-1)/5})^5=1$ and thus $t^{2(p-1)/5}$ is a 5-th root of unity in ${\mathbb F}_p$.
It is well known that, if $\zeta$ is a 5-th primitive root of unity, then $1+\zeta+\zeta^2+\zeta^3+\zeta^4=0$.
Except for certain values of $p$, one can show that 
this is the only way that
a sum of at most five 5-th roots of unity in ${\mathbb F}_p$ can be zero,
i.e.
a sum of at most five 5-th roots of unity in ${\mathbb F}_p$ can be zero
 only if they are distinct:

\begin{lemma}\label{sum_roots}
Let $p$ be a prime such that $p\equiv 1\pmod 5$ and $p\not=11, 41, 61$.
Then, a sum of at most five 5-th roots of unity in ${\mathbb F}_p$ can be zero only if 
this is the sum of the five distinct 5-th roots of unity.
\end{lemma}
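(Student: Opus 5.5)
We reduce to a finite computation in characteristic zero. Fix a primitive 5-th root of unity $\zeta$ in $\F_p$ (it exists since $p\equiv 1\pmod 5$). A sum of at most five 5-th roots of unity is then
$$\sum_{i=0}^4 n_i\zeta^i, \qquad n_i\ge 0,\ \sum_i n_i\le 5 .$$
Since $1+\zeta+\zeta^2+\zeta^3+\zeta^4=0$, we may subtract $\min_i n_i$ from every $n_i$. This lets us assume some $n_i=0$, and the case we want to exclude is $\sum n_i\ge 1$. Only the sum of five distinct roots has all $n_i=1$ with total $5$, and this is precisely the case removed by the normalization. So we must show that every nonzero vector $(n_i)$ with some $n_i=0$ and $\sum n_i\le 5$ gives a nonzero element of $\F_p$.

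Multiplying by a power of $\zeta$, we may assume $n_4=0$. The element is then $P(\zeta)$ with $P(T)=n_0+n_1T+n_2T^2+n_3T^3$, a nonzero polynomial of degree $\le 3$ with nonnegative coefficients summing to at most $5$.

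The key step is to lift to $\mathbb{Z}[\omega]$, where $\omega=e^{2\pi i/5}$. The prime $p$ splits completely in $\mathbb{Q}(\omega)$, and the map $\omega\mapsto\zeta$ is reduction modulo a prime $\mathfrak{p}$ above $p$. Hence $P(\zeta)=0$ forces $\mathfrak p\mid P(\omega)$, and so
$$p \mid N(P(\omega))=\prod_{j=1}^4 P(\omega^j).$$
In characteristic zero, $P(\omega)\neq 0$, because the minimal polynomial of $\omega$ has degree $4$ and $\deg P\le 3$. So the norm is a nonzero integer. It is also bounded: $|P(\omega^j)|\le\sum n_i\le 5$, so $|N|\le 625$. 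Therefore only primes $p\equiv1\pmod5$ with $p\le 625$ dividing one of these finitely many norms can be exceptions.

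To pin down the exceptions, compute the norms $N(P(\omega))$ for all admissible coefficient vectors $(n_0,\dots,n_3)$, up to the symmetries: multiplication by $\zeta^k$, and Galois conjugation $\zeta\mapsto\zeta^j$, which permutes the exponents. Then read off which primes $\equiv 1 \pmod 5$ occur as factors. Some sample values:
\begin{itemize}
\item $N(1+\omega)=1$;
\item $N(2+\omega)=11$;
\item $N(3+\omega)=61$;
\item $N(2+2\omega+\omega^2)$ and similar vectors give $41$ or $11$;
\item $N(1-\omega)=5$ never enters, since the coefficients are nonnegative.
\end{itemize}

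The main obstacle is this finite case enumeration. There are a few dozen vectors modulo symmetry, and we must verify that every norm factors only into primes from $\{5, 11, 41, 61\}$ together with primes not $\equiv 1 \pmod 5$. I would organize the enumeration by the multiset of the $n_i$, which must be a partition of an integer at most $5$ with at most $4$ parts. I would use the identity $N(a+b\omega)=(a^5+b^5)/(a+b)$ for two-term sums, and compute the remaining norms either by resultants with $\Phi_5$ or by a short computer check. If some norm turned out to have another prime factor $\equiv 1\pmod 5$, that prime would have to be added to the exception list. The statement of the lemma indicates this does not happen.
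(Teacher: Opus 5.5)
Your proposal follows the paper's proof: lift to $\mathbb{Z}[\zeta_5]$, note that $p$ must divide a nonzero norm of absolute value at most $5^4=625$, and finish with a finite enumeration; your reduction to $\deg P\le 3$ also makes explicit why that norm is nonzero, which the paper leaves tacit. Two of your sample claims are wrong, though harmlessly since $p\ne 5$ --- $N(2+2\omega+\omega^2)=5$, not $41$ or $11$ (the $41$ comes from $N(4+\omega)=205$), and norm $5$ does occur, e.g.\ $1+\omega+\omega^2+2\omega^3=\omega^3-\omega^4$ --- and rather than appealing to the statement of the lemma you should carry out the enumeration, which is short: the norms that occur are exactly $1,5,11,16,25,55,61,81,205,256,625$.
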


\begin{proof}
Let $\zeta_5$ be a 5-th primitive root of unity in $\mathbb C$ and $\mu_5({\mathbb C})$ the group of 5-th roots of unity in ${\mathbb C}$.
We consider in the cyclotomic number field ${\mathbb Q}(\zeta_5)$  the sums
$x_1+x_2+x_3+x_4+x_5$ where $x_i\in\mu_5({\mathbb C})\cup \{0\}$. We are looking for the primes $p\equiv 1\pmod 5$ dividing the norm of such sums.

Consider for example the algebraic integer $\alpha:=1+1+\zeta_5+\zeta_5+\zeta_5\in {\mathbb Q}(\zeta_5)$. The norm of $\alpha$ is equal to 
$$N_{{\mathbb Q}(\zeta_5)/{\mathbb Q}}(\alpha)=\prod_{(k,5)=1}\sigma_k(\alpha)$$
where $\sigma_k(\zeta_5)=\zeta_5^k$. Hence $N_{{\mathbb Q}(\zeta_5)/{\mathbb Q}}(\alpha)=3^4\prod_{k=1}^4\left(-\frac{2}{3}-\zeta_5^k\right)=3^4\Phi_5(-\frac{2}{3})$ where $\Phi_5(X)=\prod_{k=1}^4(X-\zeta_5^k)=X^4+X^3+X^2+X+1$ is the 5-th cyclotomic polynomial. Thus $N_{{\mathbb Q}(\zeta_5)/{\mathbb Q}}(\alpha)=5\times 11$.

Since $11\equiv 1\pmod 5$, the prime $p=11$ totally splits in the extension ${\mathbb Q}(\zeta_5)/{\mathbb Q}$ and the quotient of the ring of integers 
${\mathbb Z}[\zeta_5]$ by the ideal generated by $11$ is a product of fields
$${\mathbb Z}[\zeta_5]/11{\mathbb Z}[\zeta_5]\simeq {\mathbb F}_{11}^4.$$
Then we look at the image of the algebraic integer $2+3\zeta_5$ by the reduction morphisms which sends $\zeta_5$ to one of the 5-th primitive root of unity in ${\mathbb F}_{11}$ which are $\{3,9,5,4\}$. We find that $2+3\times 3\equiv 0\pmod{11}$ i.e. $1+1+3+3+3$ is a sum of five non-distincts 5-th root of unity in ${\mathbb F}_{11}$ which is equal to zero.

In the same way, we show that the algebraic integer $1+4\zeta_5\in{\mathbb Z}[\zeta_5]$ has norm equal to $5\times 41$, and the reduction morphism which sends $\zeta_5$ to  $10$, which is a 5-th primitive root of unity in ${\mathbb F}_{41}$, gives $1+10+10+10+10=0$,  a sum of five non-distincts 5-th root of unity in ${\mathbb F}_{41}$ which is equal to zero.

Again, the algebraic integers $1+3\zeta_5$ and $3+\zeta_5$ have norms equal to 61 and provide the following sums of four 5-th roots of unity in ${\mathbb F}_{61}$ which is zero: 
$20+20+20+1=0$ and $1+1+1+58=0$.

Finally, if $\alpha=x_1+x_2+x_3+x_4+x_5$ where $x_i\in\mu_5({\mathbb C})\cup \{0\}$, then $N_{{\mathbb Q}(\zeta_5)/{\mathbb Q}}(\alpha)$ is a product of at most five complex numbers of modulus at most 5, and thus the norm of $\alpha$ is upper bounded by $5^4=625$.
One can show that, aside from $11$, $41$ and $61$, no other prime number $\equiv 1\pmod 5$ and less than $625$ appears as a factor of the norm of such an element $\alpha$.
Thus, if $p\not= 11, 41, 61$ then the only way to have a  sum of at most five 5-th roots of unity in ${\mathbb F}_p$ to be zero, up to permutations, is 
$1+\zeta+\zeta^2+\zeta^3+\zeta^4=0$ where $\zeta$ is a 5-th primitive root of unity in  ${\mathbb F}_p$.
\end{proof}

\begin{proposition}
The surface $S$ in  ${\mathbb P}^3$ defined over ${\mathbb F}_p$ given by the equation:
$$x^{2(p-1)/5} + y^{2(p-1)/5} + x^{(p-1)/5} y^{(p-1)/5}+z^{2(p-1)/5} + w^{2(p-1)/5}=0$$
with $p\equiv 1\pmod 5$,  does not contain lines defined over ${\mathbb F}_p$.

If $p\not=11, 41, 61$  then we have:
$$\sharp S({\mathbb F}_p)=8\left(\frac{p-1}{5}\right)^3.$$
Moreover, if $p=11$ we have $\sharp S({\mathbb F}_{p})={18} \left(\frac{p-1}{5}\right)^3={144}$,
 if $p=41$ we have  $\sharp S({\mathbb F}_{p})=10 \left(\frac{p-1}{5}\right)^3=5120$
 {and if $p=61$ we have $\sharp S({\mathbb F}_{p})=8 \left(\frac{p-1}{5}\right)^3+2 \left(\frac{p-1}{5}\right)^2=
 14 112$.}
\end{proposition}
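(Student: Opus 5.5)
Throughout, put $e=(p-1)/5$ and, for a point $(x,y,z,w)$ with coordinates in $\F_p$, set $u=x^e$, $v=y^e$, $s=z^e$, $t=w^e$. Each of these is $0$ or a $5$-th root of unity. The equation of $S$ then becomes
$$u^2+v^2+uv+s^2+t^2=0 .$$
This is a sum of at most five $5$-th roots of unity, and zero coordinates reduce the number of nonzero terms: if $u=0$ then the term $uv$ also disappears. I plan to classify the solution patterns $(u,v,s,t)\in(\mu_5\cup\{0\})^4$ and then lift to points. Each nonzero value of a coordinate's $e$-th power is attained by exactly $e$ elements of $\F_p^*$. So a pattern with $k$ nonzero entries accounts for $e^k$ affine points.

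\textbf{The generic count, $p\neq 11,41,61$.}
\begin{itemize}
\item By Lemma \ref{sum_roots}, a vanishing sum of at most five $5$-th roots of unity must consist of exactly five terms, namely the five distinct roots.
\item Hence no coordinate can be zero: a zero coordinate leaves at most four nonzero terms, or at most three if $u$ or $v$ vanishes.
\item Write $u=\zeta^a$, $v=\zeta^b$, $s=\zeta^c$, $t=\zeta^d$. The five terms are $\zeta^{2a},\zeta^{2b},\zeta^{a+b},\zeta^{2c},\zeta^{2d}$.
\item We need the exponents $2a,2b,a+b,2c,2d$ to be pairwise distinct modulo $5$. Since $2$ is invertible mod $5$, the first three are distinct exactly when $a\neq b$. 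This gives $20$ ordered pairs $(a,b)$.
\item The pair $\{2c,2d\}$ must then equal the two remaining residues, which leaves $2$ ordered choices of $(c,d)$. So there are $40$ admissible patterns.
\item This gives $40e^4$ affine points, all with nonzero coordinates. Projectivizing, we divide by $p-1=5e$ and get $\sharp S(\F_p)=8e^3$.
\end{itemize}

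\textbf{No $\F_p$-lines.}
\begin{itemize}
\item An $\F_p$-line meets the plane $x=0$ (and likewise $w=0$) in an $\F_p$-rational point, or lies in it. So a line on $S$ forces a rational point of $S$ with $x=0$ and another with $w=0$.
\item For generic $p$, this contradicts the previous paragraph.
\item For all $p$, I would argue more robustly. A point with $x=0$ needs $v^2+s^2+t^2=0$, a vanishing sum of at most three $5$-th roots of unity. After dividing by one of the terms, this is $1+\zeta^i+\zeta^j=0$ (or a shorter sum).
\item The norms of these elements of $\mathbb Z[\zeta_5]$ are tiny. Checking them shows that no prime $p\equiv1\pmod 5$ divides them, except possibly in the explicitly listed cases.
\item For the remaining exceptional primes, I would show that the points with a zero coordinate do not fill out a line. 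Points with $x=0$ require at least two other coordinates nonzero, and these form a set too sparse to contain all $p+1$ points of a line. Failing a clean argument, a direct finite verification at $p=11,41,61$ settles it.
\end{itemize}

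\textbf{The exceptional primes.} Here I would enumerate all vanishing multisets of at most five roots of unity in $\F_p$, using the list from the proof of Lemma \ref{sum_roots}:
\begin{itemize}
\item $1+1+\zeta+\zeta+\zeta$ for $p=11$;
\item $1+\zeta+\zeta+\zeta+\zeta$ for $p=41$;
\item the four-term sums for $p=61$.
\end{itemize}
For each multiset I would count the ordered patterns $(a,b,c,d)$ realizing it in the form $\{2a,2b,a+b,2c,2d\}$. For $p=61$ this includes patterns with one zero among $s,t$, which contribute $e^3$ affine points each. That is the source of the extra term $2e^2$ after projectivizing. Then I divide by $5e$.

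The main obstacle is this bookkeeping: making sure the list of vanishing sums for each exceptional prime is complete, which rests on the norm computation in Lemma \ref{sum_roots}, and matching them correctly to the constrained shape $\{2a,2b,a+b,2c,2d\}$. I would do this by hand and cross-check by computer.
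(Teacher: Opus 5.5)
Your generic count and your line argument for $p\neq 11,41,61$ are the paper's: Lemma~\ref{sum_roots} forces five distinct roots, so no coordinate vanishes, and your $40$ exponent patterns times $e^4$ are exactly the paper's $(p-1)\cdot 4e\cdot 2e\cdot e$. The trouble is at $p=11$, where two steps of your plan fail as written.

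\textbf{The list of vanishing sums at $p=11$ is incomplete.} The list in the proof of Lemma~\ref{sum_roots} is not complete. Its completeness also does not follow from the norm computation, which only identifies the bad primes.

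For $p=41$ and $p=61$ the listed sums happen to be, up to multiplication by $\mu_5$, the only non-generic ones. There your bookkeeping correctly gives $10e^3$ and $8e^3+2e^2$.

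In $\F_{11}$ (with $\zeta=3$), however, there are further vanishing sums besides $c(1+1+3+3+3)$:
\begin{itemize}
\item the three-term sums $c(1+1+9)$;
\item the four-term sums $c(1+1+5+4)$;
\item a second five-term type $c(1+1+1+3+5)$.
\end{itemize}
Using only $1+1+\zeta+\zeta+\zeta$ you find $45$ patterns, i.e.\ $9e^3=72$ points instead of $144$. The missing points come from:
\begin{itemize}
\item $15+15$ patterns with $x=0$ or $y=0$;
\item $10+10$ patterns with $z=0$ or $w=0$;
\item $20$ further patterns with all coordinates nonzero.
\end{itemize}
The total is $(65e^4+50e^3)/(5e)=13e^3+10e^2=144$; the paper's $18e^3$ agrees with this only because $e=2$. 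So you need a genuine enumeration of all vanishing sums of at most five roots in $\F_{11}$. This is what the paper's ``direct computation'' and its list of examples amount to.

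\textbf{The line argument at $p=11$.} You are right that the Lemma does not cover this prime. The paper's own argument through the plane $x=0$ has the same hole, since $(0:1:1:5)\in S(\F_{11})$ via $1+1+9=0$. But your ``too sparse'' argument only rules out lines lying inside $\{x=0\}$. An $\F_{11}$-line not contained in that plane meets it in a single point, and $S\cap\{x=0\}$ has $12$ rational points, so nothing is excluded.

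A correct finish runs as follows.
\begin{itemize}
\item No rational point of $S$ has two zero coordinates. There are no two-term vanishing sums, $3u^2\neq 0$, and $u^2+uv+v^2=(u^3-v^3)/(u-v)\neq 0$ for $u\neq v$ in $\mu_5$.
\item Hence a line on $S$ lies in no coordinate plane. It is the join of its points on $z=0$ and on $w=0$.
\item These points lie among $(1:\pm5:0:\pm5)$, $(1:\pm2:0:\pm1)$ and $(1:\pm5:\pm5:0)$, $(1:\pm2:\pm1:0)$ respectively.
\item Checking the $64$ joins shows none lies on $S$: already their points on $x=0$ and on $y=0$ cannot both be on $S$.
\end{itemize}
For $p=41$ and $p=61$ your norm remark does suffice. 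The norms of $1+\zeta_5^i+\zeta_5^j$ and $1+\zeta_5^i$ are $1$, $16$, $81$ or $11$, so $S$ has no rational point on $x=0$.
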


\begin{proof}
Let us prove the first assertion.
Let $H$ be the hyperplane given by the equation $x=0$. The intersection $S\cap H$ does not contain any rational point since it would provides
a sum of at most three 5-th root of unity equal to zero and this cannot happens by Lemma \ref{sum_roots}.
We deduce that $S$ does not contain a line defined over ${\mathbb F}_p$ (because such a line would intersects the plane $H$ at a rational point).

We have now to count the number of points $(x:y:z:w)\in {\mathbb P}^3({\mathbb F}_p)$ such that 
$x^{2(p-1)/5} + y^{2(p-1)/5} + x^{(p-1)/5} y^{(p-1)/5}+z^{2(p-1)/5} + w^{2(p-1)/5}=0$.
If $p\not=11,41, 61$, by Lemma \ref{sum_roots}, this is necessarily a sum of five distincts 5-th roots of unity.
Consider the morphism
$$
\begin{matrix}
\psi:&{\mathbb F}_p^{\ast}\longrightarrow &{\mathbb F}_p^{\ast}\cr
&x\longmapsto  & x^{2(p-1)/5}.\cr
\end{matrix}
$$
The kernel has order $\frac{p-1}{5}$ and the image is the group of 5-th roots of unity $\mu_5({\mathbb F}_p)$ in ${\mathbb F}_p^{\ast}$.
So for the choice for $x$, any  element of ${\mathbb F}_p^{\ast}$ works, so we have $p-1$ choices.
Then, for $y$ one can choose $p-1-\frac{p-1}{5}=\frac{4(p-1)}{5}$ differents element, and finally $x^{(p-1)/5} y^{(p-1)/5}$ is completely
determined. It remains $\frac{2(p-1)}{5}$ choices for $z$, and then $\frac{p-1}{5}$ choices for $w$.
So, dividing $8(p-1)\left(\frac{p-1}{5}\right)^3$ by $p-1$ to find projective points, we get the result.

For $p=11, 41$ and $61$, a direct computation gives the result. Extra points come from sums of less than five 5-th roots of unity which are equal to zero or from sums of five nondistinct 5th roots of unity which are also equal to zero.

For example, we have $1+1+3+3+3=1+5+5=1+1+1+4+4=1+1+9=1+9+4+4+4=1+3+9+9=1+1+4+5=3+3+5=1+3+3+4=1+5+9+9+9=0$ in ${\mathbb F}_{11}$.
\end{proof}

\begin{remark}
Consider the surfaces in ${\mathbb P}^3$ given by the following equation
$$x^{2d}+x^dy^d+y^{2d}+z^{2d}+w^{2d}=0$$ 
and defined over a field 
 where they have lines 
 \begin{equation}\label{lines}
\left\{
\begin{matrix}
y = \omega x   \\
z = \lambda w \cr
\end{matrix}
\right.
\qquad
{\rm with}\  \omega^{2d}+\omega^d + 1=0\ {\rm and}\ \lambda^{2d}+1=0.
\end{equation}
One can show that all the lines in the intersection $X:=S\cap S_1 \cap S_2$ have multiplicity one.

Note that if $d=\frac{p-1}{5}$, we find again the surfaces defined over ${\mathbb F}_p$ with $p\equiv 1\pmod 5$ of the previous proposition but
the lines given in (\ref{lines}) are not defined over ${\mathbb F}_p$.

Remark also that the result about the multiplicity of the lines in $S\cap S_1 \cap S_2$ is false in general since it is false for the surface  $zw=x^2$ and the line $y=z=0$ (but this surface is singular).
\end{remark}

\begin{remark}
The bound (\ref{Felipe}) gives $\sharp S({\mathbb F}_p)\leq 28\left(\frac{p-1}{5}\right)^3.$
\end{remark}



\section{Other examples}

In this section, we collect a few other examples for which our techniques can be applied to compute a lower bound on the number of their rational points.

Define
$$
G\left(u_0, u_1, u_2, u_3\right)= u_0^2 + u_0 u_1+u_1^2+u_0 u_2 +u_2^2+u_2 u_3+u_3^2 
$$
and
$$
F\left(x, y, z, w\right)=G(x^{\frac{p-1}{7}}, y^{\frac{p-1}{7}}, z^{\frac{p-1}{7}}, w^{\frac{p-1}{7}}).
$$

If $\zeta$ is a 7-th primitive root of unity then:
$$
G\left(\zeta, \zeta^3, \zeta^{-1}, \zeta^{-3}\right)= \zeta^2+\zeta^4+\zeta^6+1 +\zeta^{-2}+\zeta^{-4}+\zeta^{-6}=0
$$
and
$$
G\left(\zeta^3, \zeta, \zeta^{-3}, \zeta^{-1}\right)=0.
$$

We checked numerically that there are no points on the surface when $u_2=0,u_3=1$ for $p<300,p \ne 29$ so there are no lines on the surface either for those values of $p$. When $p=29$, there are points with $u_2=0,u_3=1$ from $N_{{\mathbb Q}(\zeta_7)/{\mathbb Q}}(\zeta_7^2+\zeta_7+2)=29$.

The surface $F=0$ has at least $12((p-1)/7)^3$ points coming from the above identities. 
For $p=29,43,71,239,337$ it has more points, at least $m((p-1)/7)^3$ for $m=18,17,16,14,14$ respectively.

\bigskip

Finally, define
$$
G_2(u_0, u_1, u_2, u_3)=\sum_{i\le j} u_iu_j
$$
and
$$
F_2\left(x, y, z, w\right)=G_2(x^{\frac{p-1}{5}}, y^{\frac{p-1}{5}}, z^{\frac{p-1}{5}}, w^{\frac{p-1}{5}}).
$$
If $u_1,u_2,u_3$ are distinct and not equal to $1$ in the group of 5-th roots of unity $\mu_5$ then:
$G_2(1,u_1,u_2,u_3)=0$. This gives rise to $24((p-1)/5)^3$ points in $F_2=0$. If $p=31$, there are extra points
in the surface, coming from $u_1=u_2=u_3$ not equal to $1$ in $\mu_5$, totaling $28((p-1)/5)^3$ points. This example, when $p=31$ is given in \cite{Voloch}.



\bigskip
\bibliographystyle{plain}

\end{document}